\documentclass[12pt]{article}
\usepackage{amsmath,amsthm,amssymb,graphicx,caption,subcaption}
\usepackage[margin=1.1in]{geometry}
\usepackage{tikz,enumitem}
\usepackage{hyperref}
\usepackage{esvect}
\usepackage{float}
\usepackage{amsmath}
\newtheorem{theorem}{Theorem}

\newtheorem{corollary}{Corollary}
\newtheorem{lemma}{Lemma}

\begin{document}

\title{Nontrivial Solutions to a Cubic Identity and the Factorization of $n^2+n+1$}
%\author{Hajrudin Fejzi\'c}
\author{Hajrudin Fejzi\'c\thanks{Department of Mathematics, California State University San Bernardino. Email: hfejzic@csusb.edu.}}

\maketitle
\renewcommand{\thefootnote}{}
\footnotetext{2020 Mathematics Subject Classification. Primary 11D09; Secondary 11R04, 11A41.}
\renewcommand{\thefootnote}{\arabic{footnote}}

\begin{abstract}
We investigate a variation of Nicomachus's identity in which one term in the cubic sum is replaced by a different cube. Specifically, we study the Diophantine identity
\[
\sum_{j=1}^{n} j^3 + x^3 - k^3 = \left( \sum_{j=1}^{n} j + x - k \right)^2
\]
and classify all integer solutions \((k,x,n)\). A full parametric family of nontrivial solutions was introduced in a 2005 paper, along with a conjectural condition for when such solutions exist. We provide a complete proof of this characterization and show it is equivalent to a structural condition on the prime factorization of \( n^2 + n + 1 \).

Our argument connects this identity to classical results in the theory of binary quadratic forms. In particular, we analyze the equation \(a^2 + ab + b^2 = n^2 + n + 1\), interpreting it as a norm in the ring of Eisenstein integers \(\mathbb{Z}[\omega]\), where \(\omega = \frac{1 + \sqrt{-3}}{2}\). This yields a surprising connection between a modified combinatorial identity and the arithmetic of algebraic number fields.
\end{abstract}

\medskip
%\noindent\textbf{Keywords.} cubic identity; sums of cubes; Eisenstein integers; quadratic forms; number theory

\section{Introduction}

In a 2005 paper \cite{fejzic2005}, the authors explored a variation of the classical identity
\[
\sigma^3(n) := \sum_{j=1}^{n} j^3 = \left( \sum_{j=1}^{n} j \right)^2 := \big( \sigma(n) \big)^2,
\]
often referred to as Nicomachus's identity. The variant considered what happens when one of the terms in the cubic sum is replaced with a different cube. Specifically, the authors studied integer solutions to the identity
\[
\sigma^3(n) + x^3 - k^3 = \left( \sigma(n) + x - k \right)^2.
\]

They gave a full parametric classification of all integer solutions \( (k,x,n) \). Trivial solutions such as \( (x,x,n) \) and \( (n-1,2,n) \) exist for all \( n \), but the nature of nontrivial solutions turned out to be far richer.

In that same paper, the authors also announced a deeper result: that a nontrivial solution exists if and only if \( N = n^2 + n + 1 \) is not equal to a prime or three times a prime. While the main parametric result was published, the accompanying characterization was stated without proof,
due to its  technical nature, which made it unsuitable for an undergraduate journal.

In this paper, we prove a stronger result: that nontrivial solutions exist if and only if \( N = n^2 + n + 1 \) has at least two prime divisors congruent to \( 1 \mod 3 \), counted with multiplicity. \footnote{That is, if $N$ is divisible by a prime $p$ raised to a power $r$, then $p$ is said to divide $N$ with multiplicity $r$.} We then deduce the original characterization as a corollary. 

A key step in our argument involves analyzing when the quadratic form \( a^2 + ab + b^2 \) represents the quantity \( N = n^2 + n + 1 \), and how many such representations exist. To do so, we rely on classical results from the arithmetic of the Eisenstein integers \( \mathbb{Z}[\omega] \), where \( \omega = \frac{1 + \sqrt{-3}}{2} \). In particular, we use the structure of this ring to count representations, understand orbits under the unit group, and prove that each orbit contains exactly one positive solution. These tools are developed in Section~\ref{sec:Zomega}, and are crucial to the proof of Lemma~\ref{l1} and Theorem~\ref{th2}.

\section{The Modified Cubic Identity and Quadratic Norms}\label{MCI}

We impose natural constraints on $x$ and $k$ in the identity \eqref{e1}, requiring that $x \ne k$, $x \ne 2$, and $k \ne n - 1$, so that the identity does not reduce to one of the trivial cases. Suppose $(k,x,n)$ is a solution to
\begin{equation}\label{e1}
\sigma^3(n) + x^3 - k^3 = \left( \sigma(n) + x - k \right)^2.
\end{equation}

By substituting and simplifying, one obtains the identity:

$$
 x^2 + kx + k^2 = n(n+1) + (x - k).
$$

Introducing new variables $k = a - 1$, $x = b + 1$, this equation becomes:

$$
 a^2 + ab + b^2 = n^2 + n + 1 =: N,
$$

subject to the constraints $a \ne b + 2$, $b \ne 1$, and $a \ne n$, which correspond to the conditions on $x$ and $k$ above.

In addition, we are interested only in \textbf{positive} solutions, meaning those for which both $a > 0$ and $b > 0$.

The connection between the quadratic form \( a^2 + ab + b^2 \) and the norm function in \( \mathbb{Z}[\omega] \), where \( \omega = \frac{1 + \sqrt{-3}}{2} \), is classical and well known to those familiar with algebraic number theory. The following section reviews basic properties of this ring and its norms, primarily for readers who may not be as familiar with this background. Readers comfortable with the arithmetic of \( \mathbb{Z}[\omega] \) may wish to skip ahead to Section~\ref{s1}, where the main argument resumes.

\section{Arithmetic in $\mathbb{Z}[\omega]$}\label{sec:Zomega}

The following theorem is a classical result from the theory of binary quadratic forms, which plays a key role in counting the number of nontrivial solutions to our generalized cubic identity \eqref{e1}.

\begin{theorem}[Representation by Norm]\label{th1}
Let $n = 3^j p_1^{n_1} \cdots p_k^{n_k} q_1^{m_1} \cdots q_d^{m_d}$, where each $p_i \equiv 1 \mod 3$ and each $q_i \equiv 2 \mod 3$ is a rational prime. Then $n$ can be expressed as $a^2 + ab + b^2$ for some integers $a, b$ if and only if all exponents $m_i$ are even. In that case, the number of such representations is

$$
6(n_1+1)(n_2+1)\cdots(n_k+1).
$$

\end{theorem}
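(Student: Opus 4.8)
The plan is to prove Theorem~\ref{th1} by translating the representation question into the language of ideal factorization in the Dedekind domain $\mathbb{Z}[\omega]$, which has class number one and unit group $\{\pm 1, \pm \omega, \pm \omega^2\}$ of order six. The key observation is that $a^2 + ab + b^2 = N(a - b\omega)$, the norm of the Eisenstein integer $a - b\omega$ (equivalently $a + b\bar\omega$), so asking whether $n$ is represented by the form is the same as asking whether $n$ is the norm of some element of $\mathbb{Z}[\omega]$, and counting representations amounts to counting such elements up to nothing at all (since each representation gives a distinct element).

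First I would record the splitting behavior of rational primes in $\mathbb{Z}[\omega]$: the prime $3$ ramifies as $3 = -\omega^2(1-\omega)^2$ with $N(1-\omega) = 3$; a prime $p \equiv 1 \pmod 3$ splits as $p = \pi_p \bar\pi_p$ with $N(\pi_p) = p$ and $\pi_p, \bar\pi_p$ non-associate; and a prime $q \equiv 2 \pmod 3$ remains inert, with $N(q) = q^2$. This is the standard consequence of quadratic/cubic reciprocity (or of the factorization of the cyclotomic polynomial $x^2+x+1 \bmod p$), and I would cite it as classical. Next, given $n = 3^j \prod p_i^{n_i} \prod q_i^{m_i}$, I would analyze when $n = N(\alpha)$: taking norms in the factorization $(\alpha) = (1-\omega)^{e_0}\prod \mathfrak{p}_i^{a_i}\bar{\mathfrak p}_i^{b_i} \prod (q_i)^{c_i}$ forces $e_0 = j$, $a_i + b_i = n_i$, and $2c_i = m_i$; the last condition is solvable in integers iff every $m_i$ is even. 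This proves the existence half.

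For the count, assuming all $m_i$ are even, the element $\alpha$ with $N(\alpha) = n$ is determined up to a unit by the choice of exponent pair $(a_i, b_i)$ with $a_i + b_i = n_i$ for each split prime $p_i$ — giving $\prod_{i=1}^k (n_i + 1)$ ideals $(\alpha)$ — and then by one of the $6$ units multiplying the generator. Since distinct units give distinct elements $\alpha$, and distinct elements $a - b\omega$ correspond to distinct pairs $(a,b)$, the total number of representations is $6\prod_{i=1}^k(n_i+1)$, as claimed. I would note that the ramified factor $(1-\omega)^j$ and the inert factors $(q_i)^{m_i/2}$ are rigid — they contribute no extra multiplicity because there is only one prime above $3$ and the inert-prime exponent is pinned — which is why only the split primes $p_i \equiv 1 \pmod 3$ enter the formula.

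The main obstacle, and the point that needs the most care, is the bookkeeping in the counting step: one must be sure that (i) multiplying a fixed generator by each of the six units really does produce six \emph{distinct} Eisenstein integers (true because $\mathbb{Z}[\omega]$ is an integral domain and the units are distinct), and (ii) that the map from representations $(a,b)$ to elements $a-b\omega$ is a bijection onto the norm-$n$ elements — in particular that we are not over- or under-counting by conflating $a - b\omega$ with its conjugate, since $(a,b)$ and, say, $(b,a)$ or $(-a-b, a)$ generically give genuinely different representations that should both be counted. Handling the degenerate cases (when some $n_i = 0$, or when $n$ is a perfect power so that a generator is fixed by complex conjugation up to a unit) requires checking that the formula $6\prod(n_i+1)$ still holds; these boundary cases are where a naive orbit-counting argument could go wrong, so I would treat the unit action directly rather than via Burnside.
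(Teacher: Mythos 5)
Your proposal is correct and follows essentially the same route as the paper: classify the splitting of rational primes in $\mathbb{Z}[\omega]$, use unique factorization (equivalently, class number one) and multiplicativity of the norm to reduce representability to the parity of the exponents of the inert primes $q_i \equiv 2 \pmod 3$, and count representations as $\prod(n_i+1)$ choices of exponent pairs for the split primes times the $6$ units. The only differences are notational (your $1-\omega$ versus the paper's $1+\omega$ for the ramified prime, and ideal-theoretic versus element-level phrasing), and your cautionary remarks about degenerate cases are already handled by the ideal-generator count, so no gap arises.
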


This result goes back to Euler and Gauss and was later generalized using the language of algebraic number theory. We now provide a detailed proof based on the arithmetic of the ring $\mathbb{Z}[\omega]$, where $\omega = \frac{1 + \sqrt{-3}}{2}$.

\subsection*{Norms and Factorization in $\mathbb{Z}[\omega]$}
The ring $\mathbb{Z}[\omega] = \{ a + b\omega : a, b \in \mathbb{Z} \}$ is a Euclidean domain with respect to the norm function:
$N(a + b\omega) = a^2 + ab + b^2.$
This norm is multiplicative, i.e., $N(zw) = N(z)N(w)$, and plays a role analogous to the absolute value in $\mathbb{Z}$. The conjugate of $z = a + b\omega$ is defined as:
$\bar{z} = a + b\bar{\omega} = a - b\omega^2 = a + b - b\omega,$
which also belongs to $\mathbb{Z}[\omega]$.

The units in $\mathbb{Z}[\omega]$ are the six elements:
$\{ \pm 1, \pm \omega, \pm \omega^2 \},$
all of which have norm 1. As a Euclidean domain, $\mathbb{Z}[\omega]$ is also a unique factorization domain (UFD), so every nonzero non-unit element can be written uniquely (up to associates and ordering) as a product of irreducible primes.

\subsection*{Classification of Primes}
The behavior of rational primes in $\mathbb{Z}[\omega]$ falls into three distinct cases:
\begin{itemize}
\item If $p \equiv 1 \mod 3$, then $p$ splits as $p = \rho \bar{\rho}$, where $\rho, \bar{\rho} \in \mathbb{Z}[\omega]$ are distinct non-associate primes, and $N(\rho) = p$.
\item If $p \equiv 2 \mod 3$, then $p$ remains inert in $\mathbb{Z}[\omega]$, i.e., it is a prime in $\mathbb{Z}[\omega]$ and $N(p) = p^2$.
\item The prime $3$ ramifies: $3 = -\omega^2(1 + \omega)^2$. Hence, $1 + \omega$ is a prime of norm 3, and all primes $\rho \in \mathbb{Z}[\omega]$ with norm 3 are associates of $1 + \omega$.
\end{itemize}

Conversely, any prime $\rho \in \mathbb{Z}[\omega]$ falls into one of these cases:
\begin{itemize}
\item $N(\rho) = p$ where $p \equiv 1 \mod 3$, so $\rho$ divides a rational prime that splits,
\item $\rho = p \in \mathbb{Z}$ where $p \equiv 2 \mod 3$, so $\rho$ is a rational prime,
\item $N(\rho) = 3$, in which case $\rho \sim 1 + \omega$ up to unit multiple.
\end{itemize}

\subsection*{Factorization of Integer Norms}
Let $z = a + b\omega \in \mathbb{Z}[\omega]$, and define its norm as
$n = N(z) = z\bar{z} = a^2 + ab + b^2.$
We now describe how the prime factorization of $z \in \mathbb{Z}[\omega]$ determines the prime factorization of $n \in \mathbb{Z}$.

By the multiplicativity of the norm, the norm $n = N(z)$ factors as the product of the norms of the prime elements in the factorization of $z$. This yields the prime factorization of the integer $n$ in $\mathbb{Z}$. Specifically:

\begin{itemize}
\item Any rational prime $q \equiv 2 \mod 3$ that divides $n$ must appear with an even exponent, since it corresponds to an inert prime in $\mathbb{Z}[\omega]$ with norm $q^2$.
\item Any rational prime $p_i \equiv 1 \mod 3$ that appears in the prime factorization of $n$ with exponent $n_i$, must have arisen from splitting: $p_i = \rho_i \bar{\rho}_i$. Then $z$ must contain $\rho_i^{a_i} \bar{\rho}_i^{b_i}$ such that $a_i + b_i = n_i$, and the number of such exponent combinations is $n_i + 1$.
\item The prime 3 ramifies: since $3 =-\omega^2 (1 + \omega)^2$, any power of 3 in $n$ corresponds to an even power of the prime element $1 + \omega$ (up to unit multiple).
\end{itemize}

The six units in $\mathbb{Z}[\omega]$ contribute to counting distinct associate elements $z$, but all these yield the same integer norm $n$. Therefore, each distinct factorization of $z$ corresponds to six distinct ordered pairs $(a,b) \in \mathbb{Z}^2$ such that $N(a + b\omega) = a^2 + ab + b^2 = n$.

Hence, the total number of ordered integer solutions $(a,b)$ to $a^2 + ab + b^2 = n$ is
$6(n_1 + 1)(n_2 + 1) \cdots (n_k + 1).$
This completes the proof of the representation theorem.

\section{Characterization of Nontrivial Solutions}
\label{s1}
We now apply this number-theoretic structure to the problem of identifying when the modified cubic identity \eqref{e1} has nontrivial solutions.

\begin{theorem}\label{th2}
Let $N = n^2 + n + 1$. Then the identity

$$
\sigma^3(n) + x^3 - k^3 = \left( \sigma(n) + x - k \right)^2
$$

has a nontrivial solution $(k,x,n)$ if and only if $N$ has at least two prime factors (counting with multiplicity) congruent to $1 \mod 3$.
\end{theorem}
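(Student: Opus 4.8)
The plan is to translate \eqref{e1}, via the reduction carried out in Section~2, into a counting problem for representations $N=a^2+ab+b^2$, to settle that count using Theorem~\ref{th1}, and then to subtract off the finitely many representations that correspond to trivial solutions.

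The first step is to pin down the arithmetic of $N=n^2+n+1$. It is never a perfect square, since $n^2<N<(n+1)^2$ for $n\ge1$. Its factorization has the form $N=3^{\varepsilon}p_1^{n_1}\cdots p_k^{n_k}$ with $\varepsilon\in\{0,1\}$ and every $p_i\equiv1\bmod3$: a prime $p\ne3$ dividing $N$ makes $n$ have multiplicative order $3$ modulo $p$ (use $n^3-1=(n-1)N$ together with $n\not\equiv1\bmod p$), so $3\mid p-1$; and $3\mid N$ forces $n=3m+1$, whence $N=3(3m^2+3m+1)$ with the last factor prime to $3$, so $v_3(N)\le1$. In particular $N$ has no prime factor congruent to $2\bmod3$, so Theorem~\ref{th1} applies with no parity obstruction: $N$ is representable by $a^2+ab+b^2$ in exactly $R=6(n_1+1)\cdots(n_k+1)$ ordered ways.

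Next I would descend from ordered to \emph{positive} representations. The six associates $uz$ of $z=a+b\omega$ are the rotations of $z$ through multiples of $\pi/3$ in $\mathbb{C}$, and $a>0,\ b>0$ describes the open sector spanned by the rays $\mathbb{R}_{>0}\cdot1$ and $\mathbb{R}_{>0}\cdot\omega$, one of the six sectors these rotations permute. A lattice point on one of the six boundary rays would have norm a perfect square, and $N$ is not a square, so no representative of $N$ lies on such a ray; hence each orbit of six associates meets the positive sector in exactly one point, and the number of positive representations is $P=R/6=(n_1+1)\cdots(n_k+1)$. Since $P\ge3$ exactly when $n_1+\cdots+n_k\ge2$, i.e.\ exactly when $N$ has at least two prime factors congruent to $1\bmod3$ counting multiplicity, Theorem~\ref{th2} reduces to the claim that some positive representation survives the triviality constraints if and only if $P\ge3$.

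The final step, which I expect to be the main obstacle, is the bookkeeping of trivial representations. The pairs $(n,1)$ and $(1,n)$ are always positive representations (from $N=n^2+n\cdot1+1^2$ and the symmetry of the form) and correspond to the trivial solutions of \eqref{e1}, hence are excluded; the only further forbidden representation is the unique one (if it exists) with $a=b+2$, and I would note that such a representation forces $n(n+1)/3$ to be a perfect square and hence $3\nmid N$. With this inventory the argument closes by cases: if $P\ge4$, at most three positive representations are forbidden, so one survives; if $P=3$, then necessarily $N=3p^2$ with $p\equiv1\bmod3$ (the alternative $N=p^2$ is ruled out since $N$ is not a square), so $3\mid N$, the sporadic representation cannot occur, and the involution $(a,b)\mapsto(b,a)$ on the three positive representations must fix one, which then has $a=b$ and equals $(p,p)$, a nontrivial solution; and if $P=2$, then $N=p$ or $N=3p$, and since the representation set is swap-stable and contains $(n,1)$ with $n\ne1$, its two elements are exactly $(n,1)$ and $(1,n)$, both trivial, so no nontrivial solution exists. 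The hard part is thus not the number theory, which is classical, but this matching of ``trivial solution'' with ``excluded pair $(a,b)$,'' the control of the sporadic $a=b+2$ representation, and the boundary cases $P=2,3$.
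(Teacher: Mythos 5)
Your argument is correct and shares the paper's skeleton: reduce \eqref{e1} to counting positive representations of $N=a^2+ab+b^2$, get the total count $6\prod(n_i+1)$ from Theorem~\ref{th1}, use the non-squareness of $N$ to show each unit orbit contains exactly one positive representation (your rotation-by-$\pi/3$ sector argument is a geometric rephrasing of Lemma~\ref{l1}, where the paper does an explicit sign case check), and then discard the trivial representations $(n,1)$, $(1,n)$ and the at most one representation with $a=b+2$. The differences are in the endgame. At $m=3$ the paper argues purely combinatorially: a third representation with $a=b+2$ would produce a fourth one $(b,a)$ by symmetry, contradicting $m=3$. You instead invoke structural information about $N$ --- that every prime divisor is $3$ or $\equiv 1\pmod 3$ and $v_3(N)\le 1$, proved via the order-$3$ argument from $n^3-1=(n-1)N$ --- to force $N=3p^2$, rule out $a=b+2$ because $3\mid N$ makes $n(n+1)/3$ non-integral, and exhibit the swap-fixed representation $(p,p)$; your $m=2$ case likewise passes through $N=p$ or $3p$. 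Both routes work (your implicit step that among positive representations $b=1$ iff $a=n$, so $(n,1)$ is the only representation meeting those constraints, deserves the one-line verification the paper gives in the proof of Theorem~\ref{th4}), but the paper's symmetry trick is shorter and needs no factorization information. What your route buys is notable, though: your elementary lemma that every prime divisor of $n^2+n+1$ is $3$ or congruent to $1\pmod 3$ is precisely the statement the paper records as Conjecture~1, so your proposal in fact settles that conjecture affirmatively rather than leaving it as a computational observation.
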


Recall that expressed in terms of quadratic forms, Theorem~\ref{th2} is equivalent to showing that the quadratic form $a^2+ab+b^2=n^2+n+1$ has positive solutions with $a \ne b+2$ and $(a,b) \ne (n,1)$ if and only if $n^2+n+1$ has at least two prime factors (counting with multiplicity) congruent to $1 \mod 3$. (See Section~\ref{MCI}.)

\begin{lemma}\label{l1}
If $N$ is not a perfect square, then the orbit of any solution of $a^2 + ab + b^2 = N$ contains exactly one positive solution.
\end{lemma}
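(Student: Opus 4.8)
The plan is to realize the orbit of a solution explicitly inside $\mathbb{Z}[\omega]$ and then show that the hypothesis ``$N$ not a perfect square'' forces a strict sign condition which pins down exactly one representative as positive. A solution $(a,b)$ corresponds to the element $z = a + b\omega$ with $N(z) = N$, and its orbit is the set of six associates $uz$, $u \in \{\pm 1, \pm\omega, \pm\omega^2\}$. Since $\omega^2 = \omega - 1$, multiplication by $\omega$ sends $a + b\omega$ to $-b + (a+b)\omega$; iterating, the orbit of $(a,b)$ consists of the six lattice points
\[
(a,b),\quad (-b,\, a+b),\quad (-(a+b),\, a),\quad (-a,-b),\quad (b,\, -(a+b)),\quad (a+b,\, -a),
\]
which are pairwise distinct because $z \ne 0$ and the six units are distinct.

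First I would record the non-degeneracy that comes from the hypothesis: if $N$ is not a perfect square, then for every solution $(a,b)$ one has $a \ne 0$, $b \ne 0$ and $a + b \ne 0$, since $b = 0$ gives $N = a^2$, $a = 0$ gives $N = b^2$, and $a + b = 0$ gives $N = a^2 + ab + b^2 = a^2$ — each contradicting non-squareness.

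The heart of the argument is then a finite sign count. Put $c = a + b$ and let $\varepsilon_a, \varepsilon_b, \varepsilon_c \in \{+,-\}$ be the (now well-defined) signs of $a$, $b$, $c$. Because $c = a + b$, two positive summands cannot yield a negative sum and two negative summands cannot yield a positive sum, so $(\varepsilon_a, \varepsilon_b, \varepsilon_c)$ avoids $(+,+,-)$ and $(-,-,+)$, leaving exactly six admissible triples. Scanning the six associates above, the $j$-th one has both coordinates positive for exactly one prescribed sign triple: $(a,b)$ is positive iff $(\varepsilon_a,\varepsilon_b,\varepsilon_c) = (+,+,+)$; $(-b,\, a+b)$ iff $(+,-,+)$; $(-(a+b),\, a)$ iff $(+,-,-)$; $(-a,-b)$ iff $(-,-,-)$; $(b,\, -(a+b))$ iff $(-,+,-)$; $(a+b,\, -a)$ iff $(-,+,+)$. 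These are six distinct triples, and they are precisely the six admissible ones, so exactly one associate is a positive solution. Equivalently, in $\mathbb{C}$ the six associates are the rotations of $z$ by multiples of $\pi/3$, the region $a,b > 0$ is the open sector $\arg z \in (0, \pi/3)$, and non-squareness of $N$ keeps every associate off the six rays on which $\arg z$ is a multiple of $\pi/3$, so precisely one rotate lands in the sector.

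The one delicate point is the bookkeeping in the sign-to-associate dictionary, together with the observation that ``orbit'' must here mean the six-element orbit under the unit group, not the twelve-element orbit that also allows conjugation $z \mapsto \bar z$: for instance when $N = 7$ the pairs $(1,2)$ and $(2,1)$ are both positive but lie in distinct unit-orbits. Finally I would note the payoff for what follows: since every orbit has six elements and, when $N$ is not a perfect square, exactly one positive member, the number of positive solutions of $a^2 + ab + b^2 = N$ is exactly $(n_1+1)\cdots(n_k+1)$ by Theorem~\ref{th1}, and the remaining work is to decide when this count exceeds the number of trivial positive solutions.
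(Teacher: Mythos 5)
Your proof is correct and takes essentially the same route as the paper: list the six associates $uz$, use non-squareness of $N$ to rule out $a=0$, $b=0$, $a+b=0$, and decide positivity by a sign analysis. Your sign-triple bookkeeping (six admissible triples for $(\operatorname{sgn} a,\operatorname{sgn} b,\operatorname{sgn}(a+b))$ matched bijectively to the six associates) is in fact a bit more complete than the paper's version, since it delivers the uniqueness of the positive representative explicitly rather than asserting it.
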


\begin{proof}
Suppose that $z = a + b\omega$ is a solution of $a^2+ab+b^2 = N$. Since $N$ is not a perfect square, we must have $ab \ne 0$ and $a+b\neq 0$. Consider the orbit

$$
\{ z = a + b\omega,\ -z = -a - b\omega,\ \omega z = -b + (a + b)\omega,\ -\omega z = b - (a + b)\omega,$$$$\ \omega^2 z = -(a + b) + a\omega,\ -\omega^2 z = (a + b) - a\omega \}.
$$

We examine which element of the orbit has both real and imaginary coefficients (in the basis $\{1, \omega\}$) positive.

If $z$ is a positive solution, then it is the only one in its orbit with $a, b > 0$. If $a < 0$ and $b < 0$, then $-z$ is positive. If $a > 0, b < 0$, then among $\omega z$, $\omega^2 z$, one is positive. If $a < 0, b > 0$, then among $-\omega z$, $-\omega^2 z$, one is positive.
\end{proof}

\begin{proof}[Proof of Theorem~\ref{th2}]
For every \( n > 1 \), two distinct representations of \( N = n^2 + n + 1 \) as \( a^2 + ab + b^2 \) are given by \( (a,b) = (n,1) \) and \( (a,b) = (1,n) \).

To determine the number of positive representations of \( N  \) as \( a^2 + ab + b^2 \), we begin by applying Theorem~\ref{th1}. This theorem gives the total number of such representations as
\[
6(n_1 + 1)(n_2 + 1) \cdots (n_k + 1),
\]
where the \( p_i \equiv 1 \mod 3 \) are the prime divisors of \( N \), and the \( n_i \) are their respective exponents. The factor of 6 arises from the fact that \( \mathbb{Z}[\omega] \) has six units; that is, all solutions \( (a + b\omega) \) to the norm equation are grouped into orbits under multiplication by units, and each such orbit consists of exactly six distinct representations of \( N \).

Since \( N = n^2 + n + 1 \) lies strictly between \( n^2 \) and \( (n+1)^2 \), it is never a perfect square. This allows us to apply Lemma~\ref{l1}, which states that each orbit contains exactly one solution with \( a > 0 \) and \( b > 0 \). It follows that the number of positive representations of \( N \) is
\[
m = (n_1 + 1)(n_2 + 1) \cdots (n_k + 1).
\]

When \( m = 2 \), the only positive representations are \( (n,1) \) and \( (1,n) \), both of which are trivial. If \( m = 3 \), then there is one additional positive solution \( (a, b) \). If this solution satisfies \( a = b + 2 \), then it is again trivial — but then by symmetry the pair \( (b, a) \) would also be a solution, giving four total representations, contradicting \( m = 3 \). So the third solution must be nontrivial. In all other cases, where \( m \geq 4 \), at least one nontrivial solution exists.
\end{proof}

\section{When \( n^2 + n + 1 \) Is Prime or Three Times a Prime}

We are now ready to prove the result stated in the introduction and originally announced without a proof in \cite{fejzic2005}. Rather than proving it directly, we obtain it as a corollary of Theorem~\ref{th2}, after establishing a key number-theoretic lemma that is of independent interest.

\begin{lemma}\label{lem:prime_divisors}
If a prime \( p \) divides \( n^2 + n + 1 \), then either \( p = 3 \), or \( p \equiv 1 \mod 3 \).
Moreover, if \( p = 3 \), then it appears with exponent exactly one in the factorization of \( n^2 + n + 1 \).
\end{lemma}

\begin{proof}
Suppose \( p \mid n^2 + n + 1 \), so that
\[
n^2 + n + 1 \equiv 0 \pmod{p}.
\]
Since \( p \nmid n \), multiplying both sides by \( n \) yields
\[
n(n^2 + n + 1) \equiv 0 \mod p \quad \Rightarrow \quad n^3 \equiv 1 \mod p.
\]
Hence, the order of element \( n \in \mathbb{Z}_p^\times \) divides 3. If \( n \equiv 1 \mod p \), then substituting gives
\[
1^2 + 1 + 1 = 3 \equiv 0 \mod p \Rightarrow p = 3.
\]
If \( n \not\equiv 1 \mod p \), then \( n \) has order exactly 3 in the group \( \mathbb{Z}_p^\times \), which has order \( p - 1 \). By Lagrange’s Theorem, the order of any element divides the group order, so
\[
3 \mid (p - 1) \quad \Rightarrow \quad p \equiv 1 \mod 3.
\]

To show that 3 cannot appear with exponent greater than one, assume \( n \equiv 1 \mod 3 \), so \( n = 3k + 1 \). Then:
\[
n^2 + n + 1 = (3k + 1)^2 + (3k + 1) + 1 = 9k^2 + 6k + 1 + 3k + 1 + 1 = 3(3k^2 + 3k + 1),
\]
where the second factor is not divisible by 3. Therefore, \( 3 \) appears to exponent exactly one in the prime factorization.
\end{proof}

\begin{corollary}\label{c}
Let \( N = n^2 + n + 1 \). Then the identity
\begin{equation}\label{11}
\sigma^3(n) + x^3 - k^3 = \left( \sigma(n) + x - k \right)^2
\end{equation}
has a nontrivial solution \( (k,x,n) \) if and only if \( N \) is not equal to a prime or three times a prime.
\end{corollary}

\begin{proof}
By Lemma~\ref{lem:prime_divisors}, every prime divisor of \( N \) is either congruent to \( 1 \mod 3 \) or equal to 3, and 3 appears with exponent at most one. Thus, \( N \) has exactly one such prime factor (counted with multiplicity) if and only if \( N=3 \), or \(N=p\) where  a prime \( p \equiv 1 \mod 3 \),  or \( N = 3p \) for such a prime \( p \). In all other cases, \( N \) has at least two prime factors \( \equiv 1 \mod 3 \), counted with multiplicity. By Theorem~\ref{th2}, this is equivalent to the existence of a nontrivial solution to the identity \eqref{11}.
\end{proof}

%\section*{Acknowledgments}
%The author received writing and editorial suggestions from ChatGPT (OpenAI, GPT-4, 2025), particularly for improving exposition, transitions, and clarity of phrasing. All mathematical content, proofs, and arguments were developed and verified by the author.

\bigskip
%\noindent\textbf{Competing Interests.} The author declares that there are no competing interests.

\medskip
%\noindent\textbf{Funding.} No funding was received for this research.

\end{document}